\renewcommand{\PrintDOI}[1]{%
  \href{http://dx.doi.org/#1}{{\tt DOI:#1}}%
}
\renewcommand{\eprint}[1]{#1}
\numberwithin{equation}{section}
\newtheorem{theorem}{Theorem}[section]
\newtheorem{proposition}[theorem]{Proposition}
\theoremstyle{remark}
\newtheorem{remark}[theorem]{Remark}
\theoremstyle{definition}
\newcommand{\bp}{\begin{proof}}
\newcommand{\ep}{\end{proof}}
\mathchardef\mhyph="2D 				
\newcommand{\R}{\mathbb{R}}
\newcommand{\bC}{\mathbb{C}}
\newcommand{\fA}{\mathfrak{A}}
\newcommand{\fB}{\mathfrak{B}}
\newcommand{\fC}{\mathfrak{C}}
\newcommand{\fM}{\mathfrak{M}}
\newcommand{\fN}{\mathfrak{N}}
\DeclareMathOperator{\Tr}{Tr}
\newcommand{\sa}{\mathrm{sa}}
\newcommand{\amn}{\mathbin{\triangledown}}
\newcommand{\hmn}{\mathbin{!}}
\begin{document}

\title{Lieb type convexity for positive operator monotone decreasing functions}

\author{Hans Henrich Neumann}

\email{hanshne@math.uio.no}

\author{Makoto Yamashita}

\email{makotoy@math.uio.no}

\address{Department of Mathematics, University of Oslo, P.O box 1053, Blindern, 0316 OSLO, Norway}

\thanks{M.Y.~is supported by the NFR funded project 300837 ``Quantum Symmetry'' and JSPS Kakenhi 18K13421.}

\keywords{operator inequality, Lieb convexity, trace inequality}

\date{v2: 17.06.2021, minor modifications; v1: 02.06.2021}

\begin{abstract}
We prove Lieb type convexity and concavity results for trace functionals associated with positive operator monotone (decreasing) functions and certain monotone concave functions.
This gives a partial generalization of Hiai's recent work on trace functionals associated with power functions.
\end{abstract}

\maketitle

\section{Introduction}

In a breakthrough paper on concavity of quantum entropy \cite{MR0332080}, Lieb proved concavity and convexity properties of functionals of the form
\[
(A, B) \mapsto \Tr(A^p K B^q K^*)
\]
defined on the space of positive definite matrices.
Since then this has been expanded in many ways, see \citelist{\cite{MR0343073}\cite{MR1730503}\cite{MR2379699}\cite{MR3067823}\cite{MR3429039}\cite{MR4064777}} and references therein.
One definitive form is given by Hiai \cite{MR3464069}, who proved (among other configurations) the joint convexity of functionals of the form
\[
(A, B) \mapsto \Tr h \bigl(\Phi(A^{-p})^{-1/2} \Psi(B^{-q}) \Phi(A^{-p})^{-1/2}\bigr) \quad (A, B > 0)
\]
for strictly positive maps $\Phi\colon M_m(\bC) \to M_k(\bC)$, $\Psi\colon M_n(\bC) \to M_k(\bC)$, $0 < p, q \le 1$, and certain nondecreasing concave function $h$.
His proof is based on an elegant use of the variational formula for trace functionals based on the Legendre transform which can be traced back to \cite{MR2379699}, together with intricate relation between operator majorization and matrix norms.

In this short note we prove a variant of this result, where we allow functional calculus by arbitrary positive operator monotone decreasing functions inside positive maps instead of power maps, but impose a stronger condition on $h$.
A similar result for the geometric mean of positive matrices \cite{MR0420302} was recently proved by Kian and Seo \cite{MR4227812}.

Our proof is a combination the variational method with concave functions, and one-variate convexity of operator valued maps of the form $h(\Phi(f(A)))$ for operator monotone $h$ and positive operator monotone decreasing $f$ established by Kirihata and the second named author in \cite{MR4156435}, which was motivated by certain operator log-convexity of such $f$ due to Ando and Hiai \cite{MR2805638}.

Besides allowing a bigger class of functions inside the positive maps $\Phi$ and $\Psi$, we also give analogous results in the framework of C$^*$-algebras endowed with tracial states.
The overall strategy is essentially the same as the case of matrices, but we rely on Petz's work \citelist{\cite{MR796042}\cite{MR870784}} on trace inequalities for tracial von Neumann algebras, and an adaptation of Hiai's variational formula to this setting.

\section{Preliminaries}

We denote the set of positive invertible matrices by $M_n(\bC)^{++}$, and the set of selfadjoint matrices by $M_n(\bC)_{\sa}$.
A linear map $\Phi\colon M_n(\bC) \to M_k(\bC)$ is \emph{strictly positive} if it sends $M_n(\bC)^{++}$ into $M_k(\bC)^{++}$.
A real function $f(x)$ for $x > 0$ is \emph{operator monotone} when the functional calculus $f(A)$ for $A \in M_n(\bC)^{++}$ with arbitrary $n$ preserves order relation, that is, $A \le B$ in $M_n(\bC)^{++}$ implies $f(A) \le f(B)$.
For details, we refer to standard references such as \cite{MR1477662}.

For positive numbers $a$ and $b$, we denote their arithmetic and harmonic means by
\begin{align*}
a \amn b &= \frac{a + b}2,&
a \hmn b &= \frac2{a^{-1} + b^{-1}}.
\end{align*}
These admit obvious generalization to operator transforms for $A, B \in M_n(\bC)^{++}$.

Let $h(x)$ be a nondecreasing and concave function for $x > 0$, satisfying $\lim_{x \to \infty} h(x) x^{-1} = 0$.
Its \emph{Legendre transform} is given by
\[
\check h(t) = \inf_{x > 0} t x - h(x).
\]
Then $\check h$ satisfies the same assumptions as $h$ \cite{MR3464069}*{Lemma A.2}.

\medskip
Now, let us list key ingredients of our proof.
First is the variational formula for trace functionals associated with concave functions.

\begin{proposition}[\cite{MR3464069}*{Lemma A.2}]\label{prop:Hia-Tr-h-A-by-Leg-tra}
Let $h$ be as above.
For any positive matrix $A \in M_n(\bC)^{++}$, we have
\[
\Tr h(A) = \inf_{B \in M_n(\bC)^{++}} \Tr(A B - \check h(B)).
\]
\end{proposition}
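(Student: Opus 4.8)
The plan is to reduce the statement to the scalar Fenchel duality for $h$ and then lift it to the trace via Schur–Horn majorization. First I would record the pointwise consequences of the Legendre transform. By definition $\check h(t) \le tx - h(x)$ for all $x, t > 0$, which rearranges to the Fenchel–Young type inequality
\[
tx - \check h(t) \ge h(x) \qquad (x, t > 0).
\]
Since $h$ is concave, nondecreasing, upper semicontinuous, and satisfies $\lim_{x \to \infty} h(x) x^{-1} = 0$, the biconjugation theorem of convex analysis applies: taking the Legendre transform twice recovers $h$, so that
\[
h(x) = \inf_{t > 0} \bigl(t x - \check h(t)\bigr) \qquad (x > 0),
\]
with the infimum approached by a sequence of $t > 0$ for each fixed $x$.

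For the inequality $\inf_B \Tr(AB - \check h(B)) \le \Tr h(A)$, I would diagonalize $A = \sum_i a_i \xi_i \xi_i^*$ and, for each eigenvalue $a_i$, pick $t_i > 0$ nearly attaining the scalar infimum $h(a_i) = \inf_t (t a_i - \check h(t))$. Taking $B = \sum_i t_i \xi_i \xi_i^*$, which commutes with $A$ and lies in $M_n(\bC)^{++}$, one computes $\Tr(AB - \check h(B)) = \sum_i (t_i a_i - \check h(t_i))$, and this can be made arbitrarily close to $\sum_i h(a_i) = \Tr h(A)$.

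The reverse inequality $\Tr(AB - \check h(B)) \ge \Tr h(A)$ for every $B \in M_n(\bC)^{++}$ is the substantive matrix step. Writing $d_i = \langle \xi_i, B \xi_i\rangle > 0$ for the diagonal of $B$ in the eigenbasis of $A$, I have $\Tr(AB) = \sum_i a_i d_i$. Since $\check h$ is again concave (inheriting the hypotheses on $h$), the symmetric sum $\sum_i \check h(\,\cdot\,)$ is Schur-concave; as the diagonal $(d_i)$ is majorized by the eigenvalues of $B$, this yields $\sum_i \check h(d_i) \ge \Tr \check h(B)$. Combining with the Fenchel–Young inequality applied to each index gives
\[
\Tr(AB) - \Tr \check h(B) \ge \sum_i \bigl(a_i d_i - \check h(d_i)\bigr) \ge \sum_i h(a_i) = \Tr h(A).
\]

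I expect the main obstacle to be justifying the scalar biconjugation under the stated hypotheses — in particular, checking that the growth condition $\lim_{x \to \infty} h(x) x^{-1} = 0$ keeps the relevant optima inside the open half-line $(0, \infty)$, so that the matrix minimizers remain in $M_n(\bC)^{++}$, and confirming that $\check h$ genuinely inherits concavity so that Schur-concavity of $\sum_i \check h(d_i)$ is available. The majorization step itself is classical (Schur–Horn together with the fact that symmetric concave sums reverse majorization), so once the scalar duality is in place the trace estimate follows cleanly.
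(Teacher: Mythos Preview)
Your argument is correct. Note, however, that the paper does not actually prove this proposition in the matrix case---it is quoted from \cite{MR3464069}. The only proof the paper supplies is for the C$^*$-algebraic generalization in Section~\ref{sec:C-star}, and your argument is precisely the matrix specialization of that proof. There the key step is to take the $\tau$-preserving conditional expectation $E$ onto the (commutative) von Neumann subalgebra generated by $A$ and invoke Petz's trace Jensen inequality $-\tau(\check h(E(B))) \le -\tau(\check h(B))$ to reduce the infimum to $B$'s commuting with $A$. In the matrix setting $E$ is exactly the pinching that replaces $B$ by its diagonal $(d_i)$ in the eigenbasis of $A$, and the inequality $\sum_i \check h(d_i) \ge \Tr \check h(B)$ that you obtain from Schur--Horn majorization together with Schur-concavity of $\sum_i \check h(\cdot)$ is the concrete form of Petz's inequality for that pinching. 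So the two arguments coincide in content; yours is phrased in elementary majorization language, while the paper's is phrased operator-algebraically, which is what allows it to pass to general tracial C$^*$-algebras.
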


Next is a stronger form of convexity for positive operator monotone functions, as follows.

\begin{proposition}[\cite{MR4156435}*{Theorem 3.1}\footnote{We note an unfortunate typo in that paper, $B^{++}$ in \cite{MR4156435}*{Theorem 3.1} should read $B_{\sa}$.}]\label{prop:KY-main-res}
Let $g(x)$ be an operator monotone function, and $f(x)$ be a positive operator monotone decreasing function, both for $x > 0$.
For any strictly positive map $\Phi\colon M_n(\bC) \to M_k(\bC)$, the map
\[
M_n(\bC)^{++} \to M_k(\bC)_{\sa}, \quad A \mapsto g(\Phi(f(A)))
\]
is convex.
\end{proposition}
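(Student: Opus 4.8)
The plan is to strip off the outer operator monotone function $g$ and the inner positive map by means of integral representations, reducing everything to a single concavity statement about an inverse, which I would then settle using Ando's theorem together with the joint concavity of the parallel sum. Since every map in sight is norm-continuous, it suffices to establish midpoint convexity. I begin with two preliminary observations. First, because $f$ is positive and operator monotone decreasing, $-f$ is operator monotone, hence operator concave, so $f$ itself is operator \emph{convex}; consequently $A \mapsto \Phi(f(A))$ is operator convex for any positive linear $\Phi$. Second, being positive operator monotone decreasing, $f$ admits a Stieltjes-type representation $f(t) = c + \int_{(0,\infty)} (t+\lambda)^{-1}\,d\sigma(\lambda)$ with $c \ge 0$ and $\sigma \ge 0$, while $g$, operator monotone on $(0,\infty)$, admits $g(t) = a + bt + \int_{(0,\infty)} \frac{\lambda t}{\lambda+t}\,d\mu(\lambda)$ with $a \in \R$, $b \ge 0$, $\mu \ge 0$.

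Writing the integrand for $g$ as the scalar parallel sum $\frac{\lambda t}{\lambda+t} = (t^{-1}+\lambda^{-1})^{-1}$ and substituting $t = \Phi(f(A))$ yields
\[
g(\Phi(f(A))) = a I + b\,\Phi(f(A)) + \int_{(0,\infty)} \bigl(\Phi(f(A))^{-1} + \lambda^{-1} I\bigr)^{-1}\,d\mu(\lambda).
\]
The constant term is harmless, and $b\,\Phi(f(A))$ is convex by the first observation, so the whole proposition reduces to showing, for each fixed $\lambda > 0$, that $A \mapsto \bigl(\Phi(f(A))^{-1} + \lambda^{-1} I\bigr)^{-1}$ is operator convex.

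Here comes the crucial flip. I would use the variational identity $R^{-1} = \sup_{Y = Y^*}(2Y - YRY)$, valid in the Löwner order because $R^{-1} - (2Y - YRY) = (Y - R^{-1})R(Y - R^{-1}) \ge 0$. Taking $R = \Phi(f(A))^{-1} + \lambda^{-1} I$ writes the term above as a pointwise supremum over selfadjoint $Y$ of
\[
2Y - \lambda^{-1} Y^2 - Y\,\Phi(f(A))^{-1}\,Y.
\]
For fixed $Y$ this map is convex in $A$ as soon as $A \mapsto \Phi(f(A))^{-1}$ is operator concave, since conjugation by $Y$ preserves concavity and the remaining terms are constant; and a pointwise supremum of convex maps is convex. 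Thus the proposition is reduced to the single claim that $A \mapsto \Phi(f(A))^{-1}$ is operator concave — a generalization of Ando's theorem (the case $f(t) = t^{-1}$) to an arbitrary positive operator monotone decreasing $f$.

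To prove this last claim I would return to the Stieltjes representation, approximating $\sigma$ by finitely supported measures so that $\Phi(f(A)) = c\,\Phi(I) + \sum_j G_{\lambda_j}(A)$, where $G_\lambda(A) := \Phi((A+\lambda I)^{-1})$. By Ando's theorem the map $X \mapsto \Phi(X^{-1})^{-1}$ is operator concave and operator monotone, so precomposing with the affine map $A \mapsto A + \lambda I$ shows that each $H_\lambda(A) := G_\lambda(A)^{-1}$ is operator concave in $A$. Recalling the $m$-fold parallel sum $\mathrm{par}(R_1,\dots,R_m) = \bigl(\sum_i R_i^{-1}\bigr)^{-1}$, which is jointly operator concave and jointly operator monotone (Kubo--Ando), the identity $\Phi(f(A)) = c\Phi(I) + \sum_j G_{\lambda_j}(A)$ gives
\[
\Phi(f(A))^{-1} = \mathrm{par}\bigl((c\,\Phi(I))^{-1},\, H_{\lambda_1}(A),\, \dots,\, H_{\lambda_m}(A)\bigr).
\]
Feeding the operator concave maps $H_{\lambda_j}(A)$ into $\mathrm{par}$ preserves concavity: monotonicity lets one bound each argument at the midpoint below by the average of its two values, after which joint concavity yields the midpoint inequality; passing back to the integral by continuity finishes the argument. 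The main obstacle is exactly this reduced claim. Because an operator monotone (hence operator concave) function composed with an operator convex inner map need not be convex, no soft chaining of monotonicity and convexity can work; one is forced to exploit the genuine Stieltjes structure of $f$ — through Ando's theorem and the joint concavity and monotonicity of the parallel sum — rather than mere convexity, or even operator log-convexity, of $A \mapsto \Phi(f(A))$.
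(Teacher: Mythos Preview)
The paper does not prove this proposition; it is quoted from \cite{MR4156435}*{Theorem 3.1}. Your argument is essentially correct and isolates the right pivotal step --- the operator concavity of $A \mapsto \Phi(f(A))^{-1}$ --- but two comments are in order.

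First, a small technical gap: your representation $g(t) = a + bt + \int_{(0,\infty)} \frac{\lambda t}{\lambda+t}\,d\mu(\lambda)$ forces $g(0+)$ to be finite, so it misses operator monotone functions such as $\log t$ or $-t^{-1}$. The fix is actually simpler than your detour through the variational identity. Once the concavity of $A \mapsto \Phi(f(A))^{-1}$ is in hand, write $g(\Phi(f(A))) = \tilde g(\Phi(f(A))^{-1})$ with $\tilde g(s) = g(s^{-1})$. Since $g$ is operator monotone, $\tilde g$ is operator monotone decreasing, hence (by your own first observation) operator convex; and an operator convex, operator monotone decreasing function composed with an operator concave map is operator convex by the two-line chain
\[
\tilde g\bigl(C(A \amn B)\bigr) \le \tilde g\bigl(C(A) \amn C(B)\bigr) \le \tilde g(C(A)) \amn \tilde g(C(B)).
\]
This handles all operator monotone $g$ at once and makes the integral representation of $g$ and the variational formula unnecessary.

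Second, the route taken in \cite{MR4156435} to the key concavity is different from yours, and the paper signals this by citing the Ando--Hiai operator log-convexity \cite{MR2805638}. There one uses the mean inequality $f(A \amn B) \le f(A) \hmn f(B)$, which characterises positive operator monotone decreasing $f$, together with Ando's transformer inequality $\Phi(X \hmn Y) \le \Phi(X) \hmn \Phi(Y)$ for positive maps; inverting gives $\Phi(f(A \amn B))^{-1} \ge \Phi(f(A))^{-1} \amn \Phi(f(B))^{-1}$ in one stroke. Your alternative --- the Stieltjes decomposition of $f$, Ando's concavity of $X \mapsto \Phi(X^{-1})^{-1}$, and the joint concavity and monotonicity of the parallel sum --- arrives at the same place but unpacks the resolvent structure of $f$ explicitly rather than hiding it inside the Ando--Hiai inequality. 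Both are valid; the original is shorter, yours is more self-contained.
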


We will use the consequence of the above for $g(x) = -x^{-1}$, in the following form: $\Phi(f(A))^{-1}$ is concave in $A \in M_n(\bC)^{++}$ when $f$ is positive operator monotone decreasing.

We also use the Jensen inequality and monotonicity for trace functionals, which can be stated as follows.

\begin{proposition}[\cite{MR1979011}*{Theorem 2.4}]\label{prop:jensen-tr-ineq}
Let $f$ be a convex function defined on some interval $J$, and $C_1, \dots, C_k$ be elements of $M_n(\bC)$ such that $\sum_i C_i^* C_i = I_n$.
Then, for any $A_1, \dots, A_k \in M_n(\bC)_{\sa}$ such that $\sigma(A_i) \subset J$, we have
\[
\Tr f \Bigl( \sum_i C_i^* A_i C_i \Bigr) \le \Tr \Bigl( \sum_i C_i^* f(A_i) C_i \Bigr)
\] 
\end{proposition}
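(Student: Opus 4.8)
The plan is to prove the inequality directly, by diagonalizing the left-hand argument and reducing everything to the scalar Jensen inequality; this route has the pleasant feature of needing no hypothesis on $f(0)$. Write $X = \sum_i C_i^* A_i C_i$ and fix an orthonormal basis $(e_j)_{j=1}^n$ of $\bC^n$ consisting of eigenvectors of the selfadjoint matrix $X$, say $X e_j = \mu_j e_j$. Then $\Tr f(X) = \sum_j f(\mu_j)$, so it suffices to bound each $f(\mu_j)$ from above by $\sum_i \langle C_i^* f(A_i) C_i\, e_j, e_j\rangle$ and sum over $j$.

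First I would exhibit $\mu_j$ as a convex combination. Setting $w_{ij} = \norm{C_i e_j}^2 \ge 0$, the normalization $\sum_i C_i^* C_i = I_n$ gives $\sum_i w_{ij} = \langle e_j, e_j\rangle = 1$. For the indices with $w_{ij} > 0$ put $u_{ij} = \norm{C_i e_j}^{-1} C_i e_j$ and $\nu_{ij} = \langle A_i u_{ij}, u_{ij}\rangle$; then $\mu_j = \langle X e_j, e_j\rangle = \sum_i \langle A_i C_i e_j, C_i e_j\rangle = \sum_i w_{ij}\nu_{ij}$, dropping the vanishing-weight terms. Each $\nu_{ij}$ is a vector state of $A_i$, hence lies in the closed convex hull of $\sigma(A_i) \subset J$, and therefore in $J$; so does $\mu_j$, being a convex combination of such points. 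Scalar convexity of $f$ on $J$ (Jensen for the finite probability vector $(w_{ij})_i$) then yields $f(\mu_j) \le \sum_i w_{ij} f(\nu_{ij})$.

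Next I would dispose of the inner terms using the single-vector Jensen inequality: for a unit vector $u$ and a selfadjoint $A$ with $\sigma(A) \subset J$ one has $f(\langle A u, u\rangle) \le \langle f(A) u, u\rangle$, which is immediate from the spectral theorem by applying scalar Jensen to the probability measure $\langle E_A(\cdot) u, u\rangle$. Applied to $A_i$ and $u_{ij}$ this gives $f(\nu_{ij}) \le \langle f(A_i) u_{ij}, u_{ij}\rangle$, whence $w_{ij} f(\nu_{ij}) \le \langle f(A_i) C_i e_j, C_i e_j\rangle = \langle C_i^* f(A_i) C_i\, e_j, e_j\rangle$. Combining the two estimates and summing over $j$ yields $\Tr f(X) = \sum_j f(\mu_j) \le \sum_{i,j} \langle C_i^* f(A_i) C_i\, e_j, e_j\rangle = \Tr\bigl(\sum_i C_i^* f(A_i) C_i\bigr)$, as desired.

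The only genuine care needed is bookkeeping rather than a deep obstacle: one must consistently discard the degenerate terms where $C_i e_j = 0$ (there $u_{ij}$ is undefined but $w_{ij} = 0$, so nothing is contributed on either side), and one must use convexity of the interval $J$ to guarantee that every scalar $\mu_j$, $\nu_{ij}$ at which $f$ is evaluated actually lies in $J$. I would remark that this direct route is cleaner than the customary reduction to a single isometry $V = (C_1, \dots, C_k)^{t}$ with $V^*V = I_n$, which conjugates the block-diagonal operator $A = \mathrm{diag}(A_i)$ by $V$: there the rank-$n$ projection $VV^*$ introduces spurious zero eigenvalues, hence spurious $f(0)$ contributions, and this is precisely why the operator-valued version of the statement requires $f(0) \le 0$ whereas the trace version does not.
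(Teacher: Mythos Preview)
The paper does not supply a proof of this proposition; it simply quotes the result from Hansen--Pedersen \cite{MR1979011}*{Theorem 2.4} and moves on. So there is nothing to compare your argument against on the paper's side.

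Your proof is correct and self-contained. Diagonalizing $X = \sum_i C_i^* A_i C_i$, writing each eigenvalue $\mu_j$ as the convex combination $\sum_i w_{ij}\nu_{ij}$ with $w_{ij} = \norm{C_i e_j}^2$, and then applying scalar Jensen twice (once for this convex combination, once for the spectral measure $\langle E_{A_i}(\cdot) u_{ij}, u_{ij}\rangle$) is precisely the standard direct argument for the trace Jensen inequality. Your bookkeeping on the degenerate terms $C_i e_j = 0$ and on membership of all scalars in the interval $J$ is accurate: since each $\sigma(A_i)$ is compact and contained in the interval $J$, its convex hull $[\lambda_{\min}(A_i), \lambda_{\max}(A_i)]$ lies in $J$, and hence so do the $\nu_{ij}$ and their convex combinations $\mu_j$. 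Your closing remark about the block-isometry reduction introducing spurious $f(0)$ terms is also on point and explains why the operator-valued Jensen inequality in \cite{MR1979011} carries the extra hypothesis $f(0) \le 0$ while the trace version does not.
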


We use this in the following form, by taking $J = (0, \infty)$ and $f = -h$: let $h(x)$ be a concave function for $x > 0$. Then $A \mapsto \Tr h(A)$ is concave for $A \in M_n(\bC)^{++}$.
Thus, in fact the main result of~\cite{MR870784} is enough for us.

\begin{proposition}\label{prop:trace-func-monoton}
Let $f(x)$ be a monotone function with domain $J$.
Suppose that we have $A \le B$ and $\sigma(A), \sigma(B) \subset J$ for $A, B \in M_n(\bC)_{\sa}$.
Then we have
\[
\Tr(f(A)) \le \Tr(f(B)).
\]
\end{proposition}

\begin{proof}
This is well known to experts, but here is a sketch of the proof.
From the minimax principle, we see that the ordered eigenvalues of $A$ and $B$ satisfy $\lambda_i(A) \le \lambda_i(B)$ for $i = 1, \dots, n$.
Collecting the inequalities $f(\lambda_i(A)) \le f(\lambda_i(B))$, we obtain the claim.
\end{proof}

\section{Main result}

When $h(x)$ is a real function defined for $x > 0$, put
\[
\tilde h(x) = - h(x^{-1}).
\]

\subsection{Convexity}
\label{sec:main-conv}

When $h(x)$ is an monotone function for $x > 0$ such that $\tilde h$ is concave and $\lim_{x \to 0} h(x) x = 0$, we put
\[
\breve h(t) = \inf_{x > 0} t x - \tilde h(x) = \inf_{x > 0} t x + h(x^{-1}).
\]
Note that $\breve h$ is well defined as the Legendre transform $\check{\tilde h}$.

Furthermore, we will consider the class of functions $h(x)$ for $x > 0$ satisfying
\begin{equation}\label{eq:func-eq-1}
h(x \amn y) \ge h(x) \amn h(y) \ge h(x \hmn y).
\end{equation}
The first inequality is the usual concavity condition.
The second can be interpreted as concavity of $\tilde h$, hence this class is closed under the transform $h \mapsto \tilde h$.
One motivating example comes from operator monotone functions, as follows.

\begin{proposition}
Suppose that $h(x)$ is operator monotone for $x > 0$.
Then it satisfies \eqref{eq:func-eq-1}.
\end{proposition}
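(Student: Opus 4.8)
The plan is to reduce the two inequalities in \eqref{eq:func-eq-1} to ordinary (scalar) concavity of $h$ and of $\tilde h$, and then to deduce both from the classical fact that an operator monotone function on $(0,\infty)$ is concave. The first inequality $h(x \amn y) \ge h(x) \amn h(y)$ is precisely midpoint concavity of $h$, so it holds as soon as $h$ is concave. As already noted in the text, the second inequality is midpoint concavity of $\tilde h$: writing $u = x^{-1}$ and $v = y^{-1}$ one has $(u \amn v)^{-1} = x \hmn y$, so the inequality $\tilde h(u \amn v) \ge \tilde h(u) \amn \tilde h(v)$ unwinds exactly to $h(x \hmn y) \le h(x) \amn h(y)$. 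Hence it suffices to prove that both $h$ and $\tilde h$ are concave on $(0,\infty)$.

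First I would recall that every operator monotone function on $(0,\infty)$ is concave; this follows, for instance, from Löwner's integral representation $h(x) = a + bx + \int_0^\infty \frac{\lambda x}{\lambda + x}\, d\mu(\lambda)$ with $b \ge 0$ and a positive measure $\mu$, since each summand is concave and concavity is preserved under nonnegative combinations and pointwise limits (see \cite{MR1477662}). Applied to $h$ itself, this gives the first inequality. For the second inequality the key observation is that $\tilde h$ is again operator monotone: the map $A \mapsto A^{-1}$ is operator monotone decreasing on $M_n(\bC)^{++}$, so $A \le B$ forces $B^{-1} \le A^{-1}$, and applying the operator monotone $h$ yields $h(B^{-1}) \le h(A^{-1})$, i.e. $\tilde h(A) = -h(A^{-1}) \le -h(B^{-1}) = \tilde h(B)$. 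Thus $\tilde h$ is operator monotone, hence concave, and the second inequality follows.

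I do not expect a serious obstacle here; the only point requiring care is the clean citation of the two standard facts, namely that inversion is operator monotone decreasing and that operator monotone functions are concave. If one prefers a fully self-contained argument avoiding these black boxes, one can instead verify \eqref{eq:func-eq-1} directly on the building blocks $1$, $x$ and $\phi_\lambda(x) = \lambda x/(\lambda + x)$ of Löwner's representation, checking by hand that each is concave and that the transform of each is again concave (the transforms being the constant, $-x^{-1}$, and $-\lambda/(1 + \lambda x)$ respectively), and then pass to nonnegative combinations and pointwise limits, noting that the class of functions defined by \eqref{eq:func-eq-1} is closed under exactly these operations.
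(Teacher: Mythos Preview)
Your argument is correct and follows essentially the same route as the paper: both reduce \eqref{eq:func-eq-1} to the concavity of $h$ and of $\tilde h$, invoke the standard fact that operator monotone functions on $(0,\infty)$ are concave, and observe that $\tilde h$ is again operator monotone. You simply spell out in more detail why $\tilde h$ inherits operator monotonicity (via the order-reversal of inversion), which the paper leaves implicit, and you add an optional self-contained verification via L\"owner building blocks that the paper omits.
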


\begin{proof}
This observation can be traced back to \citelist{\cite{MR2805638}}, but let us repeat it here for the reader's convenience.
First, operator monotonicity of $h(x)$ for $x > 0$ implies concavity $h(x \amn y) \ge h(x) \amn h(y)$.
Next, as $\tilde h$ is also operator monotone, it is again concave.
As remarked above, this can be expressed as $h(x \hmn y) \le h(x) \amn h(y)$ up to change of variables.
\end{proof}

\begin{remark}
Recall that any operator monotone function $h(x)$ for $x > 0$ can be written as
\[
h(x) = c_0 + c_1 x + \int \frac{x \lambda - 1}{x + \lambda} d \mu(\lambda)
\]
for some $c_1 \ge 0$ and a finite measure $\mu$ on $[0, \infty)$.
If $\mu$ does not have atom on $0$, we have $\lim_{x \to 0} x h(x) = 0$.
\end{remark}

We are now ready to state and prove our main result.

\begin{theorem}\label{thm:mat-convexity}
Suppose that $h(x)$ is a monotone function for $x > 0$ such that $\tilde h$ is concave, $\lim_{x \to 0} h(x) x = 0$, and that $\breve h$ satisfies \eqref{eq:func-eq-1}.
Let $f(x)$ and $g(x)$ be positive operator monotone decreasing functions for $x > 0$, and let $\Phi\colon M_m(\bC) \to M_k(\bC)$ and $\Psi\colon M_n(\bC) \to M_k(\bC)$ be strictly positive maps.
Then, the map
\begin{equation*}
M_m(\bC)^{++} \times M_n(\bC)^{++} \to \R, \quad (A, B) \to \Tr h\bigl(\Phi(f(A))^{1/2} \Psi(g(B)) \Phi(f(A))^{1/2}\bigr)
\end{equation*}
is jointly convex.
\end{theorem}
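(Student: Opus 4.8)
The plan is to prove midpoint convexity, which upgrades to joint convexity by continuity of the functional. Fix endpoints, write $A_0 = A_1 \amn A_2$, $B_0 = B_1 \amn B_2$, and set $P_i = \Phi(f(A_i))$, $Q_i = \Psi(g(B_i))$, $X_i = P_i^{1/2} Q_i P_i^{1/2}$ for $i = 0,1,2$; all are invertible since $\Phi,\Psi$ are strictly positive and $f,g>0$. First I would record that, by the consequence of Proposition~\ref{prop:KY-main-res} noted in the text, $A \mapsto \Phi(f(A))^{-1}$ and $B \mapsto \Psi(g(B))^{-1}$ are operator concave; hence $P_0^{-1} \ge P_1^{-1}\amn P_2^{-1}$ and $Q_0^{-1}\ge Q_1^{-1}\amn Q_2^{-1}$, and inverting gives $P_0 \le P_1\hmn P_2 =: \bar P$ and $Q_0 \le Q_1\hmn Q_2 =: \bar Q$. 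Since $\Tr h(P^{1/2}QP^{1/2}) = \Tr h(Q^{1/2}PQ^{1/2})$ (equal spectra) and $h$ is nondecreasing, Proposition~\ref{prop:trace-func-monoton} makes the functional monotone in each of $P,Q$, so $\Tr h(X_0) \le \Tr h\bigl(\bar P^{1/2}\bar Q \bar P^{1/2}\bigr)$. It therefore suffices to prove
\[
\Tr h\bigl(\bar P^{1/2}\bar Q\bar P^{1/2}\bigr) \le \tfrac12\bigl[\Tr h(X_1) + \Tr h(X_2)\bigr],
\]
which I will call $(\heartsuit)$; note this is now a statement purely about the harmonic means $\bar P,\bar Q$.

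For $(\heartsuit)$ I would combine $\Tr h(Y) = -\Tr \tilde h(Y^{-1})$ with Proposition~\ref{prop:Hia-Tr-h-A-by-Leg-tra} applied to the concave function $\tilde h$ to obtain the variational formula $\Tr h(Y) = \sup_{W>0} \Tr\bigl(\breve h(W) - Y^{-1}W\bigr)$. Writing $\bar X = \bar P^{1/2}\bar Q\bar P^{1/2}$, it is then enough to bound $\Tr \breve h(W) - \Tr(\bar X^{-1}W)$ by the right-hand side of $(\heartsuit)$ for each fixed $W>0$. Here I expect the main obstacle: the naive strategy of feeding the same $W$ into the lower bounds $\Tr h(X_i) \ge \Tr \breve h(W_i) - \Tr(X_i^{-1}W_i)$ fails, because the slices $(A,B)\mapsto \Tr(X^{-1}W)$ are not jointly concave — already for scalars $\Tr(X^{-1}W)$ is a saddle. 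The decisive manoeuvre is to instead set $V = \bar P^{-1/2}W\bar P^{-1/2}$ and choose $W_i = P_i^{1/2} V P_i^{1/2}$. A short computation gives $\Tr(X_i^{-1}W_i) = \Tr(Q_i^{-1}V)$ and $\Tr(\bar X^{-1}W) = \Tr(\bar Q^{-1}V) = \tfrac12[\Tr(Q_1^{-1}V)+\Tr(Q_2^{-1}V)]$, using $\bar Q^{-1} = Q_1^{-1}\amn Q_2^{-1}$. Thus all $Q$-linear terms cancel in the average, and $(\heartsuit)$ reduces to
\[
\Tr \breve h\bigl(\bar P^{1/2}V\bar P^{1/2}\bigr) \le \tfrac12\bigl[\Tr \breve h(P_1^{1/2}VP_1^{1/2}) + \Tr \breve h(P_2^{1/2}VP_2^{1/2})\bigr],
\]
which I will call $(\spadesuit)$.

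To prove $(\spadesuit)$ I would rewrite each term via equal spectra as $\Tr \breve h(V^{1/2}P_iV^{1/2})$ and invoke the transformer identity $V^{1/2}(P_1\hmn P_2)V^{1/2} = (V^{1/2}P_1V^{1/2})\hmn(V^{1/2}P_2V^{1/2})$ for the harmonic mean under the common invertible congruence $V^{1/2}$. With $G_i = V^{1/2}P_iV^{1/2}$ this is exactly the harmonic--arithmetic convexity $\Tr \breve h(G_1\hmn G_2) \le \tfrac12[\Tr \breve h(G_1) + \Tr \breve h(G_2)]$. This is precisely where the second inequality in \eqref{eq:func-eq-1} for $\breve h$ is used: it asserts that $\widetilde{\breve h}$ is concave, and since $(G_1\hmn G_2)^{-1} = G_1^{-1}\amn G_2^{-1}$ and $\breve h(Y) = -\widetilde{\breve h}(Y^{-1})$, one has $\Tr \breve h(G_1\hmn G_2) = -\Tr \widetilde{\breve h}(G_1^{-1}\amn G_2^{-1})$, whereupon concavity of the trace functional $\Tr \widetilde{\breve h}$ (Proposition~\ref{prop:jensen-tr-ineq}) delivers $(\spadesuit)$. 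Taking the supremum over $W$ recovers $(\heartsuit)$, and the theorem follows.

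In summary, the hard part is not any single inequality but the bookkeeping of the dual variables. The congruence substitution $W_i = P_i^{1/2}VP_i^{1/2}$ is what makes the $Q$-dependent linear contributions cancel and isolates a clean one-parameter harmonic-mean convexity for $\breve h$; recognizing that the harmonic mean commutes with the congruence $V^{1/2}$, and that the hypothesis $\breve h$ satisfies \eqref{eq:func-eq-1} is exactly what turns this into the concavity of $\Tr\widetilde{\breve h}$, is the heart of the argument. I would keep the supremum formulation throughout (bounding each $W$-slice) so as to avoid any appeal to attainment of the optimizer.
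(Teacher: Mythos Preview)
Your argument is correct and, at its core, coincides with the paper's: both proofs pass through the Legendre-transform variational formula for $\tilde h$, both use the substitution that replaces the dual variable $W$ (the paper's $Y$) by a congruence $V=\bar P^{-1/2}W\bar P^{-1/2}$ (the paper's $Z$) so that the $B$- and $A$-contributions decouple, and both finish by invoking the concavity of $\Tr\widetilde{\breve h}$ together with the operator concavity of $A\mapsto \Phi(f(A))^{-1}$ from Proposition~\ref{prop:KY-main-res}.

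The one organizational difference is your preliminary monotonicity reduction to the harmonic means $\bar P=P_1\hmn P_2$ and $\bar Q=Q_1\hmn Q_2$. The paper skips this: it keeps $P_0,Q_0$ and, at a fixed near-minimizer $Z_0$, bounds the $B$-term by the \emph{operator} concavity $Z_0^{1/2}\Psi(g(\cdot))^{-1}Z_0^{1/2}$ (Proposition~\ref{prop:KY-main-res} again) and the $A$-term by the trace-level monotone concavity of $\Tr\widetilde{\breve h}$ composed with $A\mapsto Z_0^{-1/2}\Phi(f(A))^{-1}Z_0^{-1/2}$. Your reduction buys an exact cancellation of the $Q$-linear terms (so $(\spadesuit)$ is a clean one-variable statement about $\breve h$ and harmonic means), at the cost of an extra appeal to Proposition~\ref{prop:trace-func-monoton}; the paper's route avoids that step but carries an inequality rather than an equality in the $B$-part. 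Either way the same three ingredients do the work, and in fact your argument would also go through without the reduction, using $P_0,Q_0$ directly and absorbing the resulting $\Tr\bigl((Q_0^{-1}-Q_1^{-1}\amn Q_2^{-1})V\bigr)\ge 0$ term.
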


\begin{proof}
Let us write $A' = \Phi(f(A))$ and $B' = \Psi(g(B))$.
We have
\[
\Tr h\bigl(\Phi(f(A))^{1/2} \Psi(g(B)) \Phi(f(A))^{1/2}\bigr) = - \Tr\tilde h\bigl(A'^{-1/2} B'^{-1} A'^{-1/2}\bigr).
\]
Thus, it is enough to prove the joint concavity of $\Tr \tilde h(A'^{-1/2} B'^{-1} A'^{-1/2})$ in $A$ and $B$.

We closely follow the proof of \cite{MR3464069}*{Theorem 5.2}.
We first get
\[
\Tr \tilde h\bigl(A'^{-1/2} B'^{-1} A'^{-1/2}\bigr) = \inf_{Y \in M_k(\bC)^{++}} \Tr\bigl(Y A'^{-1/2} B'^{-1} A'^{-1/2} - \breve h(Y)\bigr)
\]
from Proposition \ref{prop:Hia-Tr-h-A-by-Leg-tra}.
Putting $Z = A'^{-1/2} Y A'^{-1/2}$, we can rewrite this as
\begin{equation}\label{eq:rewrite-Tr-tilde-h-brah}
\inf_{Z \in M_k(\bC)^{++}} \Tr\bigl( Z^{1/2} B'^{-1} Z^{1/2} - \breve h(Z^{1/2} A' Z^{1/2}) \bigr).
\end{equation}

Given $A_i \in M_m(\bC)^{++}$ and $B_i \in M_n(\bC)^{++}$ for $i = 1, 2$, let us fix $Z_0 \in M_k(\bC)^{++}$ that almost achieves the infimum \eqref{eq:rewrite-Tr-tilde-h-brah} for $A = A_1 \amn A_2$ and $B = B_1 \amn B_2$.
By Proposition \ref{prop:KY-main-res} applied to the operator monotone function $- x^{-1}$, the map
\[
B \mapsto Z_0^{1/2} B'^{-1} Z_0^{1/2} = \bigl(Z_0^{-1/2} \Psi(g(B)) Z_0^{-1/2}\bigr)^{-1}
\]
is concave, hence we obtain
\[
Z_0^{1/2} B'^{-1} Z_0^{1/2} \ge \bigl(Z_0^{1/2} B_1'^{-1} Z_0^{1/2}\bigr) \amn \bigl(Z_0^{1/2} B_2'^{-1} Z_0^{1/2}\bigr).
\]

As for the term involving $A'$, by assumption on $h$ the function $h' = \tilde{\breve h}$ is concave and monotone.
Thus $C \mapsto \Tr h'(C)$ is concave and monotone for $C \in M_k(\bC)^{++}$ by Propositions \ref{prop:jensen-tr-ineq} and \ref{prop:trace-func-monoton}.
This observation and the concavity of $A \mapsto Z_0^{-1/2} \Phi(f(A))^{-1} Z_0^{-1/2}$ imply that
\[
A \mapsto - \Tr \breve h\bigl(Z_0^{1/2} A' Z_0^{1/2}\bigr) = - \Tr \breve h\bigl(Z_0^{1/2} \Phi(f(A)) Z_0^{1/2}\bigr) 
\]
is concave, hence we obtain
\[
- \Tr \breve h\bigl(Z_0^{1/2} A' Z_0^{1/2}\bigr) \ge \bigl(- \Tr \breve h\bigl(Z_0^{1/2} A_1' Z_0^{1/2}\bigr)\bigr) \amn \bigl(- \Tr \breve h\bigl(Z_0^{1/2} A_2' Z_0^{1/2}\bigr)\bigr).
\]

Thus we see that \eqref{eq:rewrite-Tr-tilde-h-brah} is bounded from below by
\[
\frac12 \mathopen{}\left( \inf_{Z_1, Z_2} \Tr\Bigl( Z_1^{1/2} B_1'^{-1} Z_1^{1/2} - \breve h\bigl(Z_1^{1/2} A_1' Z_1^{1/2}\bigr) \Bigr) + \Tr\Bigl( Z_2^{1/2} B_2'^{-1} Z_2^{1/2} - \breve h\bigl(Z_2^{1/2} A_2' Z_2^{1/2}\bigr) \Bigr) \right),
\]
where $Z_1$ and $Z_2$ separately run over $M_k(\bC)^{++}$.
We thus obtained
\[
\Tr \tilde h\bigl(A'^{-1/2} B'^{-1} A'^{-1/2}\bigr) \ge \Tr \tilde h\bigl(A_1'^{-1/2} B_1'^{-1} A_1'^{-1/2}\bigr) \amn \Tr \tilde h\bigl(A_2'^{-1/2} B_2'^{-1} A_2'^{-1/2}\bigr),
\]
which is what we wanted.
\end{proof}

The above theorem applies for the following cases.
\begin{itemize}
\item $h(x) = \log x$; $\breve h(x) = 1 + \log x$.
\item $h(x) = x^r$ for $0 < r$; $\breve h(x) = r^{1/(r+1)} (1 + r^{-1}) x^{r/(r+1)}$. For $r = 1$ we recover \cite{MR4156435}*{Theorem 4.2}.
\item $h(x) = -x^{-r}$ for $0 < r \le \frac12$; $\breve h(x) = r^{r/(1-r)} (r - 1) x^{r/(r-1)}$.
Put another way,
\[
\Tr\bigl(\Phi(f(A))^{1/2} \Psi(g(B)) \Phi(f(A))^{1/2}\bigr)^{-r}
\]
is concave in $A$ and $B$ for such $r$.
\end{itemize}

\begin{remark}
If $\breve h$ is operator monotone, we can avoid using Propositions \ref{prop:jensen-tr-ineq} and \ref{prop:trace-func-monoton} as the map
\[
M_m(\bC)^{++} \to M_k(\bC)^{++}, \quad A \mapsto -\breve h(Z_0^{1/2} \Phi(f(A)) Z_0^{1/2})
\]
would be operator concave.
The above examples all satisfy this additional assumption.
\end{remark}

\begin{remark}\label{rem:Hiai-conv-II}
By \cite{MR3464069}*{Theorem 5.2},
\[
\Tr h\bigl(\Phi(A^{-p})^{-1/2} \Psi(B^{-q}) \Phi(A^{-p})^{-1/2}\bigr)
\]
is convex if $h(x)$ is a nondecreasing function for $x > 0$ such that either of $h(x^{-(1+p)})$ or $h(x^{-(1+q)})$ is convex.
The above examples of $h$ fall under this setting.
For $h(x) = -x^{-r}$, the bound on $r$ is sharp as seen from the case of $p = q = 1$, $A = B$, and $\Phi(A) = A = \Psi(A)$, see also \cite{MR3067823} for a more precise condition on $h$ that depends on $p$ and $q$.
\end{remark}

\subsection{Concavity}

The concave analogue, which is easier, goes as follows.

\begin{theorem}\label{thm:mat-concavity}
Let $h(x)$ be a concave monotone function for $x > 0$ such that $\lim_{x \to \infty} h(x) x^{-1} = 0$, and that $\check h$ satisfies \eqref{eq:func-eq-1}.
Let $f(x)$ and $g(x)$ be positive operator monotone functions for $x > 0$, and let $\Phi\colon M_m(\bC) \to M_k(\bC)$ and $\Psi\colon M_n(\bC) \to M_k(\bC)$ be strictly positive maps.
Then, the map
\begin{equation*}
M_m(\bC)^{++} \times M_n(\bC)^{++} \to \R, \quad (A, B) \to \Tr h\bigl(\Phi(f(A))^{1/2} \Psi(g(B)) \Phi(f(A))^{1/2}\bigr)
\end{equation*}
is jointly concave.
\end{theorem}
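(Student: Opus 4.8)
The plan is to follow the proof of Theorem \ref{thm:mat-convexity} very closely, the essential simplification being that here I apply the variational formula directly to $h$ rather than to $\tilde h$, so that no initial passage to inverses is needed. Writing $A' = \Phi(f(A))$ and $B' = \Psi(g(B))$, observe that since $f$ and $g$ are now positive operator monotone (hence operator concave) and $\Phi$, $\Psi$ are positive linear maps, the maps $A \mapsto A'$ and $B \mapsto B'$ into $M_k(\bC)^{++}$ are themselves operator concave. Applying Proposition \ref{prop:Hia-Tr-h-A-by-Leg-tra} to $h$ and to the positive matrix $C = A'^{1/2} B' A'^{1/2}$ gives
\[
\Tr h\bigl(A'^{1/2} B' A'^{1/2}\bigr) = \inf_{Y \in M_k(\bC)^{++}} \Tr\bigl(Y A'^{1/2} B' A'^{1/2} - \check h(Y)\bigr).
\]
Substituting $Z = A'^{1/2} Y A'^{1/2}$, which ranges over all of $M_k(\bC)^{++}$, and using cyclicity of the trace together with the fact that $Y = A'^{-1/2} Z A'^{-1/2}$ and $Z^{1/2} A'^{-1} Z^{1/2}$ are similar, this rewrites the right-hand side as
\[
\inf_{Z \in M_k(\bC)^{++}} \Tr\bigl(Z^{1/2} B' Z^{1/2} - \check h\bigl(Z^{1/2} A'^{-1} Z^{1/2}\bigr)\bigr).
\]

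The crux is then to show that for each fixed $Z$ the integrand is jointly concave in $(A, B)$; granting this, joint concavity of $\Tr h(C)$ follows because a pointwise infimum of concave functions is concave, which one may also phrase, exactly as in Theorem \ref{thm:mat-convexity}, by fixing $Z_0$ almost achieving the infimum at the midpoint $A = A_1 \amn A_2$, $B = B_1 \amn B_2$ and bounding below. The $B$-dependent term is the easy half: $B \mapsto Z^{1/2} \Psi(g(B)) Z^{1/2}$ is operator concave, so taking its trace, a positive linear functional, yields a concave scalar function of $B$.

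The main obstacle is the $A$-dependent term $-\Tr \check h\bigl(Z^{1/2} A'^{-1} Z^{1/2}\bigr)$, and this is precisely where the hypotheses on $h$ are consumed. Since $f$ is now operator monotone \emph{increasing}, it is $A'$, and not $A'^{-1}$, that is operator concave, so I would not work with $A'^{-1}$ directly. Instead, using the elementary identity $\check h(y) = -\tilde{\check h}(y^{-1})$ together with the spectral correspondence $(Z^{1/2} A'^{-1} Z^{1/2})^{-1} = Z^{-1/2} A' Z^{-1/2}$, I would rewrite
\[
-\Tr \check h\bigl(Z^{1/2} A'^{-1} Z^{1/2}\bigr) = \Tr \tilde{\check h}\bigl(Z^{-1/2} A' Z^{-1/2}\bigr).
\]
Now $h'' = \tilde{\check h}$ is concave because $\check h$ satisfies \eqref{eq:func-eq-1} and this class is closed under $h \mapsto \tilde h$, and it is nondecreasing because $\check h$ is nondecreasing; hence $C \mapsto \Tr h''(C)$ is concave and monotone by Propositions \ref{prop:jensen-tr-ineq} and \ref{prop:trace-func-monoton}. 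Composing this concave monotone trace functional with the operator concave map $A \mapsto Z^{-1/2} \Phi(f(A)) Z^{-1/2}$ then delivers concavity of the $A$-term, completing the proof. The one point requiring care is checking that $\tilde{\check h}$ is genuinely both concave and nondecreasing under the stated hypotheses, so that Propositions \ref{prop:jensen-tr-ineq} and \ref{prop:trace-func-monoton} are simultaneously applicable; this is exactly the content of assuming that $\check h$ satisfies \eqref{eq:func-eq-1} together with the monotonicity of $h$.
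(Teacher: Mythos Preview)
Your proposal is correct and is precisely the ``completely analogous'' argument the paper alludes to: apply Proposition~\ref{prop:Hia-Tr-h-A-by-Leg-tra} directly to $h$, make the same substitution $Z = A'^{1/2} Y A'^{1/2}$, and handle the $A$-term via the identity $-\check h(y) = \tilde{\check h}(y^{-1})$ together with the operator concavity of $A \mapsto Z^{-1/2}\Phi(f(A))Z^{-1/2}$. The only structural difference from Theorem~\ref{thm:mat-convexity} is that here Proposition~\ref{prop:KY-main-res} is not needed, since $f$ and $g$ are already operator concave; this is exactly why the paper calls the concave case ``easier.''
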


We omit the proof as it is completely analogous to that of Theorem \ref{thm:mat-convexity}.
The above theorem applies for the following cases.
\begin{itemize}
\item $h(x) = \log x$; $\check h(x) = 1 + \log x$.
\item $h(x) = x^r$ for $0 < r \le \frac12$; $\check h(x) = r^{r/(1-r)} (r - 1) x^{r/(r-1)}$.
\item $h(x) = -x^{-r}$ for $0 < r$; $\check h(x) = r^{1/(r+1)} (1 + r^{-1}) x^{r/(r+1)}$.
\end{itemize}

\section{\texorpdfstring{C$^*$}{C*}-algebraic setting}
\label{sec:C-star}

The above results have straightforward generalization to the setting of unital C$^*$-algebras with tracial states.
In this section $\fA$, $\fB$, and $\fC$ denote unital C$^*$-algebras, and $\tau$ denotes a tracial state on $\fC$.
We use notations such as $\fA^{++}$ and $\fA_{\sa}$ analogous to the case of matrix algebras.

First let us establish a generalization of Proposition \ref{prop:Hia-Tr-h-A-by-Leg-tra} to this setting.

\begin{proposition}
Let $h(x)$ be a concave function for $x > 0$ such that $\lim_{x \to \infty} h(x) x^{-1} = 0$.
For any $A \in \fC^{++}$, we have
\[
\tau(h(A)) = \inf_{B \in \fC^{++}} \tau(A B - \check h(B)). 
\]
\end{proposition}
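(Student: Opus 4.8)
The plan is to adapt the matrix argument behind Proposition~\ref{prop:Hia-Tr-h-A-by-Leg-tra}, proving the two directions of the identity separately and feeding all the operator-theoretic content through Petz's trace inequalities. I would first pass to the GNS von Neumann algebra of $(\fC, \tau)$, so that $\tau$ becomes a normal tracial state, $A \in \fC^{++}$ maps to a positive invertible operator, and functional calculus is available; Petz's results then apply verbatim. Throughout I use the scalar Legendre duality: writing $x^{*}(t) = (h')^{-1}(t)$ for the inverse of the decreasing derivative of $h$, one has
\[
\check h(t) = t\,x^{*}(t) - h(x^{*}(t)), \qquad x\,h'(x) - \check h(h'(x)) = h(x),
\]
together with the elementary inequality $h(x) \le tx - \check h(t)$ for all $x, t > 0$.

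For the easy direction $\inf_{B} \tau(AB - \check h(B)) \le \tau(h(A))$ I would exhibit a near-optimal $B$. Taking $B_{0} = h'(A) \in \fC^{++}$ by continuous functional calculus, $B_0$ commutes with $A$, so the second scalar identity above, applied through functional calculus, gives $A B_0 - \check h(B_0) = h(A)$ as operators and hence $\tau(AB_0 - \check h(B_0)) = \tau(h(A))$. If $h'$ happens to vanish at the top of $\sigma(A)$, strict positivity of $B_0$ is restored by replacing it with $B_0 + \epsilon 1$ and letting $\epsilon \to 0$, using continuity of $\check h$ on the compact spectrum.

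The substantive direction is $\tau(h(A)) + \tau(\check h(B)) \le \tau(AB)$ for every $B \in \fC^{++}$. Setting $C = x^{*}(B) \in \fC^{++}$, functional calculus on $B$ yields $\check h(B) = BC - h(C)$, so by the trace property the claim is equivalent to $\tau(h(A)) - \tau(h(C)) \le \tau((A - C)B)$. Since $h' \circ x^{*} = \mathrm{id}$ we have $h'(C) = B$, so this is exactly the Klein (tangent-line) inequality
\[
\tau(h(A)) \le \tau(h(C)) + \tau\bigl(h'(C)(A - C)\bigr).
\]
I would deduce this from the concavity of the scalar function $s \mapsto \tau(h(C + s(A - C)))$ on $[0,1]$ together with the standard first-order formula $\tfrac{d}{ds}\tau(h(C + sD))\big|_{s=0} = \tau(h'(C) D)$; a concave function lies below its tangent at $s=0$, evaluated at $s=1$. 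The two ingredients here, namely concavity of $X \mapsto \tau(h(X))$ and the derivative formula, are precisely what Petz's trace inequalities for tracial von Neumann algebras supply.

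The hard part is not the algebra but the regularity of $h$: a concave function need not be differentiable everywhere, and $x^{*}$ is only a generalized inverse when $h$ fails to be strictly concave. I would dispose of this by first proving the identity for smooth, strictly concave $h$, where $h'$ and $x^{*}$ are genuine decreasing bijections and every step above is literal, and then approximating a general admissible $h$ from below by such functions and passing to the limit via monotone convergence for the normal trace. Alternatively one works throughout with supergradients in place of $h'$, which is all that the Klein inequality actually requires.
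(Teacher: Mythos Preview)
Your argument is correct but follows a genuinely different route from the paper's. The paper also passes to the GNS von Neumann algebra $\fM$, but then reduces the infimum to the \emph{commutative} subalgebra $\fN$ generated by $A$: using the $\tau$-preserving conditional expectation $E\colon \fM \to \fN$ together with Petz's Jensen inequality applied to the convex function $-\check h$, one has $\tau(A B - \check h(B)) \ge \tau(A\,E(B) - \check h(E(B)))$, so restricting $B$ to $\fN^{++}$ does not change the infimum; there the identity is pointwise scalar Legendre duality, and a density argument brings it back to $\fC^{++}$. Your approach instead establishes the bound $\tau(h(A)) + \tau(\check h(B)) \le \tau(AB)$ directly via a Klein--Peierls inequality, trading the conditional-expectation trick for a differentiation argument. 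Two remarks: the derivative formula $\tfrac{d}{ds}\tau(h(C+sD))\big|_{s=0} = \tau(h'(C)D)$ is standard (resolvent calculus plus traciality) but is not itself one of Petz's inequalities; and for $x^* = (h')^{-1}$ to be defined on all of $\sigma(B)$ for \emph{arbitrary} $B \in \fC^{++}$ you need $h'(0^+) = +\infty$, which does not follow from smoothness and strict concavity alone, so your approximants should be chosen accordingly (e.g.\ perturbing by a small multiple of $-x^{-1}$). The paper's route sidesteps all such regularity issues, since it only ever uses convexity of $-\check h$; yours is more hands-on but self-contained once the approximation is set up carefully.
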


\begin{proof}
Let $\fM$ be the von Neumann algebraic closure of $\fC$ in the GNS representation associated with $\tau$.
We denote the extension of $\tau$ to $\fM$ again by $\tau$.
Let $\fN$ be the von Neumann subalgebra of $\fM$ generated by the image of $A$.
Then there is a (unique) $\tau$-preserving conditional expectation $E \colon \fM \to \fN$.

As $-\check h(x)$ is convex for $x > 0$, and $E$ is unital positive map, we have
\[
-\tau(\check h(E(B))) \le -\tau(E(\check h(B))) = -\tau(\check h(B))
\]
for any $B \in \fM^{++}$ by \cite{MR870784}*{Corollary}.
Combined with $\tau(A B) = \tau(A E(B))$, we obtain
\[
\inf_{B \in \fM^{++}} \tau(A B - \check h(B)) = \inf_{B \in \fN^{++}} \tau(A B - \check h(B)).
\]
The right hand side is equal to $\inf_{f} \tau(A f(A) - \check h(f(A)))$, where $f$ runs over bounded nonnegative Borel measurable functions on $\sigma(A)$, hence it is equal to $h(A)$.
By a standard approximation argument, the same infimum is achieved when $f$ runs over nonnegative continuous functions on $\sigma(A)$.

Finally, together with the obvious inequality
\[
\inf_{B \in \fM^{++}} \tau(A B - \check h(B)) \le \inf_{B \in \fC^{++}} \tau(A B - \check h(B)),
\]
we obtain the claim.
\end{proof}

Proposition \ref{prop:KY-main-res} holds in this setting, as \cite{MR4156435}*{Theorem 3.1} was already proved for in such generality.
The rest is quite well known, as follows.

\begin{proposition}[\cite{MR2606886}]
Let $f$ be a convex function defined on some interval $J$, and $C_1, \dots, C_k$ be elements of $\fA$ such that $\sum_i C_i^* C_i = 1$.
Then, for any $A_1, \dots, A_k \in \fA_{\sa}$ such that $\sigma(A_i) \subset J$, we have
\[
\tau \Bigl( f \Bigl( \sum_i C_i^* A_i C_i \Bigr) \Bigr) \le \tau \Bigl( \sum_i C_i^* f(A_i) C_i \Bigr)
\] 
\end{proposition}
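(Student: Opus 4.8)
The plan is to reduce the inequality to the single test family of \emph{angle functions} $f_t(\lambda) = (\lambda - t)_+$ and then to prove it for these by a spectral projection argument. First I would pass to a tracial von Neumann algebra, exactly as in the preceding proposition: let $\fM$ be the weak closure of the relevant algebra in the GNS representation of $\tau$, with $\tau$ extended to a faithful normal tracial state. Since only the finitely many elements $A_i, C_i$ and the value of $\tau$ are involved, it suffices to prove the inequality inside $\fM$, where Borel functional calculus and the spectral projections I need are available.

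Write $B = \sum_i C_i^* A_i C_i$, and note $\sigma(B) \subset J$: if $m \le A_i \le M$ with $[m,M] \subset J$, then $m = \sum_i C_i^*(mI)C_i \le B \le M$ by $\sum_i C_i^* C_i = 1$. The two sides of the claim define positive functionals $\mu(g) = \tau(g(B))$ and $\nu(g) = \sum_i \tau(C_i^* g(A_i) C_i)$ on bounded Borel functions on $J$; both are probability measures, since $\nu$ is manifestly positive and $\mu(1) = \tau(1) = 1 = \sum_i \tau(C_i^* C_i) = \nu(1)$. Their barycenters agree as well, $\mu(\mathrm{id}) = \tau(B) = \sum_i \tau(C_i^* A_i C_i) = \nu(\mathrm{id})$. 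Using the integral representation of a convex function on the compact interval containing all the spectra, $f(\lambda) = a + b\lambda + \int (\lambda - t)_+\, d\rho(t)$ with $\rho = f'' \ge 0$, the affine part contributes equally to $\mu$ and to $\nu$, so by positivity of $\rho$ and interchange of $\tau$ with the integral the full claim $\mu(f) \le \nu(f)$ follows once I establish $\mu(f_t) \le \nu(f_t)$ for each $t$, that is, $\tau\bigl((B-t)_+\bigr) \le \sum_i \tau\bigl(C_i^*(A_i - t)_+ C_i\bigr)$.

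For the latter, set $D_i = A_i - t$ and $X = B - t = \sum_i C_i^* D_i C_i$, and let $P = \chi_{(0,\infty)}(X) \in \fM$, so that $X_+ = XP$ and hence $\tau(X_+) = \tau(XP) = \sum_i \tau(D_i C_i P C_i^*)$ by the trace property. Put $Q_i = C_i P C_i^* \ge 0$, which satisfies $Q_i \le C_i C_i^*$ because $0 \le P \le 1$. Then $\tau(D_i Q_i) = \tau((D_i)_+ Q_i) - \tau((D_i)_- Q_i) \le \tau((D_i)_+ Q_i)$ since $(D_i)_- \ge 0$, and $\tau((D_i)_+ Q_i) = \tau\bigl((D_i)_+^{1/2} Q_i (D_i)_+^{1/2}\bigr) \le \tau\bigl((D_i)_+^{1/2} C_i C_i^* (D_i)_+^{1/2}\bigr) = \tau\bigl(C_i^*(D_i)_+ C_i\bigr)$, where the middle step conjugates $Q_i \le C_i C_i^*$ by $(D_i)_+^{1/2}$ and applies the order-preserving trace. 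Summing over $i$ yields the required inequality $\tau(X_+) \le \sum_i \tau\bigl(C_i^*(D_i)_+ C_i\bigr)$.

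I expect the main obstacle to be bookkeeping rather than any single hard estimate: one must check that the passage to $\fM$ is harmless, that $\nu$ genuinely is a probability measure sharing the barycenter of $\mu$, and that the reduction to the angle functions $f_t$ (the representation $f = \text{affine} + \int(\cdot - t)_+\,d\rho$ together with the interchange of $\tau$ and $d\rho$) is justified on the compact spectral interval. Once that scaffolding is in place, the essential inequality for $f_t$ is the short spectral-projection computation above, which never leaves the realm of order-preservation of the trace.
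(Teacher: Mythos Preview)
Your proof is correct. The paper itself does not supply a proof of this proposition; it simply quotes the result from Harada--Kosaki \cite{MR2606886} and remarks that the earlier Jensen inequality of Petz \cite{MR870784} already covers the cases actually needed. So there is nothing in the paper to compare your argument against.

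Your route --- pass to the tracial von Neumann closure, decompose a convex function on the compact spectral interval as an affine part plus $\int(\lambda-t)_+\,d\rho(t)$, and then verify the inequality for each angle function by the spectral-projection estimate --- is a standard and clean proof of the trace Jensen inequality. The core computation is sound: with $P=\chi_{(0,\infty)}(X)$ and $Q_i=C_iPC_i^*$ one has $0\le Q_i\le C_iC_i^*$, whence $\tau(D_iQ_i)\le\tau((D_i)_+Q_i)\le\tau((D_i)_+C_iC_i^*)=\tau(C_i^*(D_i)_+C_i)$, and summing gives $\tau(X_+)\le\sum_i\tau(C_i^*(D_i)_+C_i)$. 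The scaffolding you flag (that continuous functional calculus is respected by the GNS map, that $\mu$ and $\nu$ are probability measures with the same barycenter, and that Fubini justifies the interchange of $\tau$ with $d\rho$ on a compact interval) is all routine.
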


Again the setting of \cite{MR870784} is enough for us, as we only need to deal with convex functions defined for $x > 0$.

\begin{proposition}[\cite{MR796042}*{Theorem 2}]
Let $f(x)$ be a monotone function with domain $J$.
Suppose that we have $A \le B$ and $\sigma(A), \sigma(B) \subset J$ for $A, B \in \fA_{\sa}$.
Then we have
\[
\tau(f(A)) \le \tau(f(B)).
\]
\end{proposition}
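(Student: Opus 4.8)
The plan is to reduce to a finite von Neumann algebra and then imitate the matrix argument of Proposition~\ref{prop:trace-func-monoton}, replacing ordered eigenvalues by the spectral distribution of a self-adjoint element with respect to the trace. First I would pass to the GNS von Neumann algebra $\fM$ of $(\fC, \tau)$, on which $\tau$ extends to a faithful normal tracial state; since $\tau(1) = 1$, this makes $\fM$ a finite von Neumann algebra, and $A, B$ may be regarded as self-adjoint elements of $\fM$ with spectra in $J$. One may also assume, as in Proposition~\ref{prop:trace-func-monoton}, that $f$ is nondecreasing.

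The heart of the argument is a comparison of spectral projections: for every $t \in \R$, writing $p_t^X = \chi_{(t,\infty)}(X)$ for the spectral projection of $X \in \fM_{\sa}$, I claim that $A \le B$ implies $\tau(p_t^A) \le \tau(p_t^B)$. To see this, set $p = p_t^A$ and $q = 1 - p_t^B = \chi_{(-\infty, t]}(B)$. For $0 \ne \xi \in \operatorname{ran}(p)$ one has $\langle A\xi, \xi\rangle > t\norm{\xi}^2$, because the spectral measure of $A$ associated to $\xi$ is supported in the open set $(t, \infty)$, while for $\xi \in \operatorname{ran}(q)$ one has $\langle B\xi, \xi\rangle \le t\norm{\xi}^2$. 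Since $A \le B$ forces $\langle A\xi,\xi\rangle \le \langle B\xi,\xi\rangle$, no nonzero $\xi$ can lie in both ranges, i.e.\ $p \wedge q = 0$. The Kaplansky parallelogram law then gives $p = p - p \wedge q \sim p \vee q - q$, and applying the trace in the finite algebra $\fM$ yields $\tau(p) = \tau(p \vee q) - \tau(q) \le \tau(1) - \tau(q) = \tau(p_t^B)$, as claimed.

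With this in hand I would finish by integrating against $f$. Let $\mu_X = \tau \circ E_X$ denote the spectral distribution of $X$, a Borel probability measure on $\sigma(X) \subset J$ for which $\tau(f(X)) = \int f \, d\mu_X$; normality of the extended $\tau$ is what legitimises this identity and the underlying Borel functional calculus. The projection comparison says exactly that the tail functions satisfy $\mu_A((t, \infty)) \le \mu_B((t,\infty))$ for all $t$ (and likewise $\mu_A([t,\infty)) \le \mu_B([t,\infty))$ by the same argument, using strictness on the $A$-side). Writing $f(\lambda) = f(m) + \int_{(m, \lambda]} df$ for $\lambda$ in a compact interval $[m, M] \supset \sigma(A) \cup \sigma(B)$ and interchanging the order of integration, a layer-cake computation gives
\[
\tau(f(X)) = f(m) + \int_{(m, M]} \mu_X([t, \infty)) \, df(t).
\]
Since $f$ is nondecreasing, the Stieltjes measure $df$ is nonnegative, so the tail comparison yields $\tau(f(A)) \le \tau(f(B))$ at once. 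Equivalently, one may compare the decreasing rearrangements $\lambda_A(s) \le \lambda_B(s)$ and integrate the pointwise inequality $f(\lambda_A(s)) \le f(\lambda_B(s))$ over $s \in [0,1]$, which is the exact transcription of the matrix proof.

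I expect the main obstacle to be the projection comparison of the second paragraph, specifically the passage from ``the two ranges meet only in $0$'' to the trace inequality, which genuinely uses finiteness of $\fM$ through the Kaplansky law and the invariance of $\tau$ under Murray--von Neumann equivalence; this is the feature with no counterpart in the naive minimax argument for matrices. The remaining measure-theoretic bookkeeping---justifying $\tau(f(X)) = \int f\, d\mu_X$ from normality and handling the endpoints and possible atoms in the layer-cake formula---is routine but should be stated carefully.
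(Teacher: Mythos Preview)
The paper does not supply its own proof of this proposition; it simply quotes the result from Petz \cite{MR796042}*{Theorem 2}. Your argument is correct and is essentially the standard one underlying that citation: in a finite von Neumann algebra the order $A\le B$ forces the spectral-projection inequality $\tau(\chi_{(t,\infty)}(A))\le\tau(\chi_{(t,\infty)}(B))$ via the Kaplansky parallelogram law, and the trace inequality for a monotone $f$ follows by a layer-cake (or decreasing-rearrangement) computation. Petz packages the first step as a comparison of \emph{spectral scales} $\lambda_A(s)\le\lambda_B(s)$, which is exactly the alternative formulation you mention at the end; so your route and the cited one coincide in substance.

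Two small points of bookkeeping. First, the paper's statement has $A,B\in\fA_{\sa}$ while the ambient tracial state lives on $\fC$; this is a harmless notational looseness in the paper, and your passage to the GNS closure is the right move regardless of the symbol used. Second, your claim that the extended trace on $\fM$ is faithful is automatic from the GNS construction, but the argument does not actually need faithfulness---only normality (for the spectral-measure identity $\tau(f(X))=\int f\,d\mu_X$) and the trace property (for invariance under Murray--von Neumann equivalence). Your identification of the Kaplansky step as the genuine content, with the rest being measure-theoretic routine, is accurate.
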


With above results at hand, the proof of our main results carry over to C$^*$-algebraic setting, and we obtain the following.

\begin{theorem}
Let $f(x)$, $g(x)$, and $h(x)$ be real functions for $x > 0$ as in Theorem \ref{thm:mat-convexity}.
Let $\Phi\colon \fA \to \fC$ and $\Psi\colon \fB \to \fC$ be strictly positive maps.
Then the map
\begin{equation}
\fA^{++} \times \fB^{++} \to \R, \quad (A, B) \to \tau \bigl( h\bigl(\Phi(f(A))^{1/2} \Psi(g(B)) \Phi(f(A))^{1/2}\bigr) \bigr)
\end{equation}
is jointly convex.
\end{theorem}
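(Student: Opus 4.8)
The plan is to transcribe the proof of Theorem~\ref{thm:mat-convexity} line by line, replacing the trace $\Tr$ on $M_k(\bC)$ by the tracial state $\tau$ on $\fC$ and each matrix-level ingredient by the tracial counterpart established earlier in this section. Writing $A' = \Phi(f(A))$ and $B' = \Psi(g(B))$, which lie in $\fC^{++}$ since $f(A), g(B)$ are positive invertible and $\Phi, \Psi$ are strictly positive, the functional-calculus identity
\[
\tau\bigl(h\bigl(A'^{1/2} B' A'^{1/2}\bigr)\bigr) = -\tau\bigl(\tilde h\bigl(A'^{-1/2} B'^{-1} A'^{-1/2}\bigr)\bigr)
\]
reduces the claim to joint concavity of $(A,B) \mapsto \tau(\tilde h(A'^{-1/2} B'^{-1} A'^{-1/2}))$. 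This identity uses only the spectral theorem and the trace property of $\tau$, so it carries over unchanged.

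Next I would invoke the tracial variational formula proved above (the analogue of Proposition~\ref{prop:Hia-Tr-h-A-by-Leg-tra}) to write
\[
\tau\bigl(\tilde h\bigl(A'^{-1/2} B'^{-1} A'^{-1/2}\bigr)\bigr) = \inf_{Y \in \fC^{++}} \tau\bigl(Y A'^{-1/2} B'^{-1} A'^{-1/2} - \breve h(Y)\bigr),
\]
and then perform the substitution $Z = A'^{-1/2} Y A'^{-1/2}$, which is a homeomorphism of $\fC^{++}$ onto itself since $A'$ is a fixed positive invertible element. This recasts the infimum as
\[
\inf_{Z \in \fC^{++}} \tau\bigl(Z^{1/2} B'^{-1} Z^{1/2} - \breve h(Z^{1/2} A' Z^{1/2})\bigr).
\]
Given $A_i \in \fA^{++}$ and $B_i \in \fB^{++}$ for $i = 1, 2$, I would fix $Z_0 \in \fC^{++}$ almost achieving this infimum at the midpoints $A = A_1 \amn A_2$ and $B = B_1 \amn B_2$, the slack being absorbed in the usual limiting argument.

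The two summands are then handled exactly as in the matrix case. For the first, Proposition~\ref{prop:KY-main-res} (which, as noted, was proved in this generality) applied to the operator monotone function $-x^{-1}$ yields the operator concavity in $\fC$ of $B \mapsto Z_0^{1/2} B'^{-1} Z_0^{1/2} = (Z_0^{-1/2} \Psi(g(B)) Z_0^{-1/2})^{-1}$. For the second, setting $h' = \tilde{\breve h}$ and using $-\breve h(x) = h'(x^{-1})$, I would rewrite $-\tau(\breve h(Z_0^{1/2} A' Z_0^{1/2})) = \tau(h'(Z_0^{-1/2} \Phi(f(A))^{-1} Z_0^{-1/2}))$; since $h'$ is concave and monotone by the assumption on $h$, the functional $C \mapsto \tau(h'(C))$ is both concave and monotone on $\fC^{++}$ by the tracial analogues of Propositions~\ref{prop:jensen-tr-ineq} and~\ref{prop:trace-func-monoton}, and composing it with the operator concave map $A \mapsto Z_0^{-1/2} \Phi(f(A))^{-1} Z_0^{-1/2}$ (again Proposition~\ref{prop:KY-main-res}) gives concavity in $A$. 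Bounding the displayed infimum below by the arithmetic mean of the two separate infima over $Z_1, Z_2 \in \fC^{++}$ then delivers joint concavity.

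The point worth emphasising is that no genuinely new obstacle arises: every operator-order inequality lives inside $\fC$ and is identical to the matrix argument, while the only step where the von Neumann algebra structure intervenes nontrivially is the tracial variational formula, whose proof via the $\tau$-preserving conditional expectation $E\colon \fM \to \fN$ has already been carried out. The one place requiring a moment's care is that the operator concavity statements must be pushed to $\tau$ only through the monotonicity of $C \mapsto \tau(h'(C))$ followed by its concavity, in that order, rather than by any termwise application of $\tau$ to the operator inequalities.
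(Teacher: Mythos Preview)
Your proposal is correct and follows exactly the approach the paper takes: the paper simply states that, with the tracial analogues of Propositions~\ref{prop:Hia-Tr-h-A-by-Leg-tra}--\ref{prop:trace-func-monoton} in hand, the proof of Theorem~\ref{thm:mat-convexity} carries over verbatim. Your write-up is in fact more explicit than the paper's one-line justification, and the care you note about composing monotonicity and concavity of $C \mapsto \tau(h'(C))$ with the operator concavity of $A \mapsto Z_0^{-1/2}\Phi(f(A))^{-1}Z_0^{-1/2}$ is precisely what the paper's proof of Theorem~\ref{thm:mat-convexity} does implicitly.
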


\begin{theorem}
Let $f(x)$, $g(x)$, and $h(x)$ be real functions for $x > 0$ as in Theorem \ref{thm:mat-concavity}.
Let $\Phi\colon \fA \to \fC$ and $\Psi\colon \fB \to \fC$ be strictly positive maps.
Then, the map
\begin{equation*}
\fA^{++} \times \fB^{++} \to \R, \quad (A, B) \to \tau \bigl( h\bigl(\Phi(f(A))^{1/2} \Psi(g(B)) \Phi(f(A))^{1/2}\bigr) \bigr)
\end{equation*}
is jointly concave.
\end{theorem}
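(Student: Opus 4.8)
The plan is to run the concave analogue of the argument proving Theorem~\ref{thm:mat-convexity}, replacing the canonical trace $\Tr$ by the tracial state $\tau$ and invoking throughout the C$^*$-algebraic counterparts of the three ingredients recorded in this section: the variational formula for $\tau$ established above (the tracial analogue of Proposition~\ref{prop:Hia-Tr-h-A-by-Leg-tra}), the tracial Jensen inequality of \cite{MR2606886}, and the tracial monotonicity of \cite{MR796042}*{Theorem 2}, together with Proposition~\ref{prop:KY-main-res}, which already holds in this generality. Writing $A' = \Phi(f(A)) \in \fC^{++}$ and $B' = \Psi(g(B)) \in \fC^{++}$, I would apply the tracial variational formula to the concave function $h$ and the element $A'^{1/2} B' A'^{1/2}$ to get
\[
\tau\bigl(h(A'^{1/2} B' A'^{1/2})\bigr) = \inf_{Y \in \fC^{++}} \tau\bigl(A'^{1/2} B' A'^{1/2} Y - \check h(Y)\bigr).
\]
Since for fixed $A$ the congruence $Y \mapsto A'^{1/2} Y A'^{1/2}$ is a bijection of $\fC^{++}$ onto $\fC^{++}$, I would substitute $Z = A'^{1/2} Y A'^{1/2}$ and use the trace property $\tau(XY) = \tau(YX)$ to rewrite the right-hand side as
\[
\inf_{Z \in \fC^{++}} \tau\bigl(B' Z - \check h\bigl(A'^{-1/2} Z A'^{-1/2}\bigr)\bigr),
\]
where now $Z$ ranges over all of $\fC^{++}$ independently of $(A, B)$.

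Next I would show that, for each fixed $Z$, the integrand is jointly concave in $(A, B)$; since a pointwise infimum of concave functions is concave, this gives the theorem. For the first term, $B \mapsto \tau(B' Z) = \tau(\Psi(g(B)) Z)$ is concave because $g$ is positive operator monotone, hence operator concave, $\Psi$ is positive and linear, and $X \mapsto \tau(X Z)$ is a positive linear functional for $Z \ge 0$. For the second term, set $W = Z^{-1/2} A' Z^{-1/2} = Z^{-1/2} \Phi(f(A)) Z^{-1/2}$, which is operator concave in $A$ because $f$ is operator concave and $\Phi$ is positive and linear. The hypothesis that $\check h$ satisfies \eqref{eq:func-eq-1} says precisely that $k = \tilde{\check h}$ is concave and nondecreasing, and the commuting functional calculus identity $-\check h(W^{-1}) = k(W)$ (with $W^{-1} = Z^{1/2} A'^{-1} Z^{1/2}$) gives $-\tau\bigl(\check h(Z^{1/2} A'^{-1} Z^{1/2})\bigr) = \tau(k(W))$; this last functional is concave in $A$ by composing the operator concavity of $W$ with the concavity and monotonicity of $C \mapsto \tau(k(C))$, the latter furnished by the tracial Jensen and monotonicity propositions above.

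The one point that is not a mechanical transcription, and which I expect to be the main obstacle, is the passage between $\check h(A'^{-1/2} Z A'^{-1/2})$, which appears after the substitution, and $\check h(Z^{1/2} A'^{-1} Z^{1/2}) = \check h(W^{-1})$, which is what the concavity analysis uses. In the matrix case this is the elementary fact that $S S^*$ and $S^* S$ have the same nonzero eigenvalues; here, with $S = A'^{-1/2} Z^{1/2}$, it must be replaced by the tracial identity $\tau(\varphi(S S^*)) = \tau(\varphi(S^* S))$. Since $S$ is a product of invertible elements it is invertible, so its polar decomposition $S = U\absv{S}$ has $U$ unitary and $S S^* = U (S^* S) U^*$, whence the identity follows from $\tau(U X U^*) = \tau(X)$ applied to $X = \varphi(S^*S)$. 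One should also verify that all spectra produced by the substitution lie in $(0, \infty)$, so that the functional calculi of $\check h$, $k$, and the inverse are legitimate and the cited propositions apply; this is automatic since $A'$, $B'$, and $Z$ all lie in $\fC^{++}$ and congruences and inverses of positive invertible elements remain positive invertible.
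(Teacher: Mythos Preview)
Your proposal is correct and follows essentially the same route as the paper, which simply asserts that the matrix argument carries over with $\tau$ in place of $\Tr$; your explicit justification of $\tau(\varphi(SS^*))=\tau(\varphi(S^*S))$ via the polar decomposition of the invertible element $S=A'^{-1/2}Z^{1/2}$ fills in the one step that the matrix proof handles implicitly through the equality of spectra of $SS^*$ and $S^*S$. Note, incidentally, that Proposition~\ref{prop:KY-main-res}, which you list among the ingredients, is never actually invoked in your argument---nor is it needed, since in the concave case $f$ and $g$ are operator monotone (increasing) and hence already operator concave, which is precisely why the paper calls this direction ``easier''.
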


\begin{bibdiv}
\begin{biblist}
\bib{MR2805638}{article}{
      author={Ando, Tsuyoshi},
      author={Hiai, Fumio},
       title={Operator log-convex functions and operator means},
        date={2011},
        ISSN={0025-5831},
     journal={Math. Ann.},
      volume={350},
      number={3},
       pages={611\ndash 630},
      eprint={\href{http://arxiv.org/abs/0911.5267}{\texttt{arXiv:0911.5267
  [math.FA]}}},
         url={http://dx.doi.org/10.1007/s00208-010-0577-4},
         doi={10.1007/s00208-010-0577-4},
      review={\MR{2805638}},
}

\bib{MR1477662}{book}{
      author={Bhatia, Rajendra},
       title={Matrix analysis},
      series={Graduate Texts in Mathematics},
   publisher={Springer-Verlag, New York},
        date={1997},
      volume={169},
        ISBN={0-387-94846-5},
         url={http://dx.doi.org/10.1007/978-1-4612-0653-8},
         doi={10.1007/978-1-4612-0653-8},
      review={\MR{1477662}},
}

\bib{MR3429039}{article}{
      author={Carlen, Eric~A.},
      author={Frank, Rupert~L.},
      author={Lieb, Elliott~H.},
       title={Some operator and trace function convexity theorems},
        date={2016},
        ISSN={0024-3795},
     journal={Linear Algebra Appl.},
      volume={490},
       pages={174\ndash 185},
      eprint={\href{http://arxiv.org/abs/1409.0564}{\texttt{arXiv:1409.0564
  [math-ph]}}},
         url={https://doi.org/10.1016/j.laa.2015.11.006},
         doi={10.1016/j.laa.2015.11.006},
      review={\MR{3429039}},
}

\bib{MR1730503}{incollection}{
      author={Carlen, Eric~A.},
      author={Lieb, Elliott~H.},
       title={A {M}inkowski type trace inequality and strong subadditivity of
  quantum entropy},
        date={1999},
   booktitle={Differential operators and spectral theory},
      series={Amer. Math. Soc. Transl. Ser. 2},
      volume={189},
   publisher={Amer. Math. Soc., Providence, RI},
       pages={59\ndash 68},
  eprint={\href{http://arxiv.org/abs/math/0701352}{\texttt{arXiv:math/0701352
  [math.OA]}}},
         url={https://doi.org/10.1090/trans2/189/05},
         doi={10.1090/trans2/189/05},
      review={\MR{1730503}},
}

\bib{MR2379699}{article}{
      author={Carlen, Eric~A.},
      author={Lieb, Elliott~H.},
       title={A {M}inkowski type trace inequality and strong subadditivity of
  quantum entropy. {II}. {C}onvexity and concavity},
        date={2008},
        ISSN={0377-9017},
     journal={Lett. Math. Phys.},
      volume={83},
      number={2},
       pages={107\ndash 126},
      eprint={\href{http://arxiv.org/abs/0710.4167}{\texttt{arXiv:0710.4167
  [math.OA]}}},
         url={https://doi.org/10.1007/s11005-008-0223-1},
         doi={10.1007/s11005-008-0223-1},
      review={\MR{2379699}},
}

\bib{MR0343073}{article}{
      author={Epstein, H.},
       title={Remarks on two theorems of {E}. {L}ieb},
        date={1973},
        ISSN={0010-3616},
     journal={Comm. Math. Phys.},
      volume={31},
       pages={317\ndash 325},
         url={http://projecteuclid.org/euclid.cmp/1103859039},
      review={\MR{0343073}},
}

\bib{MR1979011}{article}{
      author={Hansen, Frank},
      author={Pedersen, Gert~K.},
       title={Jensen's operator inequality},
        date={2003},
        ISSN={0024-6093},
     journal={Bull. London Math. Soc.},
      volume={35},
      number={4},
       pages={553\ndash 564},
  eprint={\href{http://arxiv.org/abs/math/0204049}{\texttt{arXiv:math/0204049
  [math.OA]}}},
         url={https://doi.org/10.1112/S0024609303002200},
         doi={10.1112/S0024609303002200},
      review={\MR{1979011}},
}

\bib{MR2606886}{article}{
      author={Harada, Tetsuo},
      author={Kosaki, Hideki},
       title={Trace {J}ensen inequality and related weak majorization in
  semi-finite von {N}eumann algebras},
        date={2010},
        ISSN={0379-4024},
     journal={J. Operator Theory},
      volume={63},
      number={1},
       pages={129\ndash 150},
      review={\MR{2606886}},
}

\bib{MR3067823}{article}{
      author={Hiai, Fumio},
       title={Concavity of certain matrix trace and norm functions},
        date={2013},
        ISSN={0024-3795},
     journal={Linear Algebra Appl.},
      volume={439},
      number={5},
       pages={1568\ndash 1589},
      eprint={\href{http://arxiv.org/abs/1210.7524}{\texttt{arXiv:1210.7524
  [math.FA]}}},
         url={https://doi.org/10.1016/j.laa.2013.04.020},
         doi={10.1016/j.laa.2013.04.020},
      review={\MR{3067823}},
}

\bib{MR3464069}{article}{
      author={Hiai, Fumio},
       title={Concavity of certain matrix trace and norm functions. {II}},
        date={2016},
        ISSN={0024-3795},
     journal={Linear Algebra Appl.},
      volume={496},
       pages={193\ndash 220},
      eprint={\href{http://arxiv.org/abs/1507.00853}{\texttt{arXiv:1507.00853
  [math.FA]}}},
         url={https://doi.org/10.1016/j.laa.2015.12.032},
         doi={10.1016/j.laa.2015.12.032},
      review={\MR{3464069}},
}

\bib{MR4227812}{article}{
      author={Kian, Mohsen},
      author={Seo, Yuki},
       title={Jointly convex mappings related to {L}ieb's theorem and
  {M}inkowski type operator inequalities},
        date={2021},
        ISSN={1664-2368},
     journal={Anal. Math. Phys.},
      volume={11},
      number={2},
       pages={Paper No. 72, 33},
      eprint={\href{http://arxiv.org/abs/2010.12856}{\texttt{arXiv:2010.12856
  [math.FA]}}},
         url={https://doi.org/10.1007/s13324-021-00513-4},
         doi={10.1007/s13324-021-00513-4},
      review={\MR{4227812}},
}

\bib{MR4156435}{article}{
      author={Kirihata, Megumi},
      author={Yamashita, Makoto},
       title={Strengthened convexity of positive operator monotone decreasing
  functions},
        date={2020},
        ISSN={0025-5521},
     journal={Math. Scand.},
      volume={126},
      number={3},
       pages={559\ndash 567},
      eprint={\href{http://arxiv.org/abs/1902.07941}{\texttt{arXiv:1902.07941
  [math.FA]}}},
         url={https://doi.org/10.7146/math.scand.a-120579},
         doi={10.7146/math.scand.a-120579},
      review={\MR{4156435}},
}

\bib{MR0332080}{article}{
      author={Lieb, Elliott~H.},
       title={Convex trace functions and the {W}igner-{Y}anase-{D}yson
  conjecture},
        date={1973},
        ISSN={0001-8708},
     journal={Advances in Math.},
      volume={11},
       pages={267\ndash 288},
         url={https://doi.org/10.1016/0001-8708(73)90011-X},
      review={\MR{0332080}},
}

\bib{MR796042}{article}{
      author={Petz, D\'{e}nes},
       title={Spectral scale of selfadjoint operators and trace inequalities},
        date={1985},
        ISSN={0022-247X},
     journal={J. Math. Anal. Appl.},
      volume={109},
      number={1},
       pages={74\ndash 82},
         url={https://doi.org/10.1016/0022-247X(85)90176-3},
         doi={10.1016/0022-247X(85)90176-3},
      review={\MR{796042}},
}

\bib{MR870784}{article}{
      author={Petz, D\'{e}nes},
       title={Jensen's inequality for positive contractions on operator
  algebras},
        date={1987},
        ISSN={0002-9939},
     journal={Proc. Amer. Math. Soc.},
      volume={99},
      number={2},
       pages={273\ndash 277},
         url={https://doi.org/10.2307/2046624},
         doi={10.2307/2046624},
      review={\MR{870784}},
}

\bib{MR0420302}{article}{
      author={Pusz, W.},
      author={Woronowicz, S.~L.},
       title={Functional calculus for sesquilinear forms and the purification
  map},
        date={1975},
        ISSN={0034-4877},
     journal={Rep. Mathematical Phys.},
      volume={8},
      number={2},
       pages={159\ndash 170},
      review={\MR{0420302}},
}

\bib{MR4064777}{article}{
      author={Zhang, Haonan},
       title={From {W}igner-{Y}anase-{D}yson conjecture to
  {C}arlen-{F}rank-{L}ieb conjecture},
        date={2020},
        ISSN={0001-8708},
     journal={Adv. Math.},
      volume={365},
       pages={107053, 18},
         url={https://doi.org/10.1016/j.aim.2020.107053},
         doi={10.1016/j.aim.2020.107053},
      review={\MR{4064777}},
}

\end{biblist}
\end{bibdiv}
\end{document}